\documentclass[12pt]{amsart}
\usepackage[utf8]{inputenc}

\usepackage{hyperref}
\hypersetup{
    colorlinks=true,
    linkcolor=cyan,
    citecolor=red,
}


\usepackage{geometry}
\usepackage{setspace}
\usepackage{amsmath}
\usepackage{amsfonts}
\usepackage{amssymb}

\usepackage{xpatch}
\usepackage{amsthm}
\newtheorem{theorem}{Theorem}[section]

\newtheorem{lemma}[theorem]{Lemma}

\theoremstyle{remark}
\newtheorem{remark}[theorem]{Remark}

\def\R{\mathbb{R}}
\def\N{\mathbb{N}}
\def\d{\mathrm{d}}
\def\p{\partial}
\def\l{\left(}
\def\r{\right)}

\DeclareMathOperator*{\esssup}{ess\,sup}
\DeclareMathOperator*{\B}{B}

\geometry{left=25mm, right=20mm, top=25mm, bottom=25mm}

\pagestyle{plain}

\allowdisplaybreaks[1]


\begin{document}

\title{Gagliardo-Nirenberg inequality with H\"older norms}

\author{Mengxia Dong}
\address{Division of General Education, Shenzhen University of Advanced Technology, Shenzhen 518083, Guangdong, China}
\email{dongmengxia@suat-sz.edu.cn}


\subjclass[2020]{46E35, 46B70, 35A23}

\keywords{Gagliardo-Nirenberg inequality, H\"older spaces, Sobolev spaces, interpolation}

\begin{abstract}
    The classical Gagliardo-Nirenberg inequality, a fundamental result in interpolation theory, traditionally involves Lebesgue norms of functions and their derivatives. We extend this inequality by introducing an interpolation lemma that bridges Lebesgue and Hölder spaces, replacing conventional Sobolev norms with appropriate Hölder norms. This extension not only broadens the range of admissible parameters but also establishes a unified framework for inequalities in Sobolev and Hölder spaces, offering new insights and applications.
\end{abstract}

\maketitle

\tableofcontents

\section{Introduction}

For any weakly differentiable function $u\in W^{1,p}$ with $1\le p<n$, the classical Sobolev embedding theorem states that
\begin{equation}\label{SobolevInequality}
    \|u\|_{p^\ast}\le C\|\nabla u\|_p,
\end{equation}
where $p^\ast=\frac{np}{n-p}$ and $C>0$ is independent of $u$. If $p>n$, then according to the Morrey's inequality, for the continuous representative, the following holds:
\begin{equation}\label{MorreyInequality}
    \|u\|_{C^{0,1-\frac{n}{p}}}\le C\|\nabla u\|_p.
\end{equation}
These results are well documented in standard references such as \cite{brezis2011functional} or \cite{evans2010partial}.

As fundamental tools in  Functional Analysis, Partial Differential Equations, Sobolev spaces describe the regularity and integrability of functions, while Sobolev embeddings reveal the connection between these two aspects. However, as a space for continuous functions, H\"older spaces mainly emphasize the regularity of functions, but lack a description of their integrability. Both regularity and integrability are essential for describing the rate of change of functions. Based on this commonality, L. Nirenberg introduced a formal integrability parameter $p<0$ for Hölder spaces \cite{nirenberg2011elliptic}, successfully unifying these two types of spaces. Here is the definition:
\begin{equation*}
    |u|_p=[u]_{C^{p_1,p_2}}
    =\sum_{|\alpha|=p_1}\sup_{x\ne y}\frac{|D^\alpha u(x)-D^\alpha u(y)|}{|x-y|^{p_2}},
\end{equation*}
where
\begin{equation}\label{Notation}
    \begin{aligned}
        p_1&=\left[-\frac{n}{p}\right], \\
        p_2&=-\frac{n}{p}-p_1.
    \end{aligned}
\end{equation}
Under this notation, when $p>n$, it is easy for us to verify that
\begin{align*}
    p^\ast_1&=\left[-\frac{n}{p^\ast}\right]=\left[1-\frac{n}{p}\right]=0, \\
    p^\ast_2&=-\frac{n}{p^\ast}-p^\ast_1=1-\frac{n}{p}.
\end{align*}
Thus
\begin{equation}\label{MorreyInequalitySeminorm}
    |u|_{p^\ast}=[u]_{C^{0,1-\frac{n}{p}}}\le C\|\nabla u\|_p.
\end{equation}
It can be seen that (\ref{MorreyInequalitySeminorm}), as a Morrey's inequality with only seminorms, closely resembles the Sobolev inequality (\ref{SobolevInequality}). This similarity hints at a deeper connection between the two. The notation used here unifies the parameters of Sobolev and Hölder spaces, suggesting a framework where these spaces can be studied together.

From this perspective, Sobolev and Hölder spaces, though defined differently, can be seen as part of a unified, larger space. This raises the question of whether functional inequalities valid in Sobolev spaces can extend to this broader setting. The main contribution of this paper is to extend the Gagliardo-Nirenberg inequality from Sobolev spaces to a new space encompassing both Sobolev and Hölder spaces.

Next, we review some relevant results of the Gagliardo-Nirenberg inequality. In L. Nirenberg's article \cite{nirenberg2011elliptic}, he presents the famous interpolation inequality.
\begin{equation}\label{GagliardoNirenbergInequality}
    \|D^l u\|_q \le C \|D^k u\|_p^\theta \|u\|_r^{1-\theta},
\end{equation}
for $p,q,r\ge 1$ and all functions $u\in C^\infty_0(\R^n)$ with a constant $C$ independent of $u$. Here, $k,l\in\mathbb{N}$ with $k>l$, and the parameters satisfy
\begin{equation}\label{ParameterCondition0}
    \frac{1}{q}-\frac{l}{n} = \theta\cdot\l\frac{1}{p}-\frac{k}{n}\r+(1-\theta)\cdot\frac{1}{r},
\end{equation}
for all $\theta$ in the interval
\begin{equation*}
    \frac{l}{k}\le\theta\le 1.
\end{equation*}
This inequality was also proven by E. Gagliardo in \cite{gagliardo1959ulteriori} and independently by L. Nirenberg, therefore, the inequality is referred to as the Gagliardo-Nirenberg inequality. Gagliardo-Nirenberg inequality has diverse applications in Functional Analysis, Partial Differential Equations, and Mathematical Physics. It is particularly useful in studying the regularity properties of solutions to elliptic and parabolic equations, interpolation theory, and in establishing compactness results for function spaces.

The traditional Gagliardo-Nirenberg inequality applies only to Sobolev spaces. In 1995, A. Kufner and A. Wannebo \cite{kufner1995interpolation} modified the Gagliardo-Nirenberg inequality (\ref{GagliardoNirenbergInequality}) by replacing the $L^r$-norm of $u$, $\|u\|_r$ with the H\"older seminorm $|u|_r$ on the right-hand side. The specific results are as follows:

Let $p,q\ge 1$, $-\infty<r<-n$ and $\theta\in(0,1)$ satisfy the same parameter conditions as in (\ref{ParameterCondition0}). Then for all functions $u\in C^\infty_0(\R^n)$, there exists a constant $C$ independent of $u$ such that
\begin{equation}\label{GagliardoNirenbergInequalityOneHolder}
    \|D^l u\|_q \le C \|D^k u\|_p^\theta |u|_r^{1-\theta},
\end{equation}

Subsequently, in 2018, A. Molchanova, T. Roskovec, and F. Soudsk\'y  extended the inequality further by replacing both Lebesgue norms, $\|u\|_q$ and $\|u\|_r$, with H\"older seminorms $|u|_q$ and $|u|_r$ \cite{soudsky2018interpolation}. However, they only considered the basic case $k=l=0$ and did not extend it to higher-order derivatives. The specific results are as follows:

Let $p\in[1,+\infty]$, $q,r\in(-\infty,-n)$ and $\theta\in(0,1)$ satisfy the parameter conditions
\begin{equation}\label{ParameterCondition02}
    \frac{1}{q}=\frac{\theta}{p}+\frac{1-\theta}{r}.
\end{equation}
Then for all functions $u\in C^\infty_0(\R^n)$, there exists a constant $C$ independent of $u$ such that
\begin{equation}\label{GagliardoNirenbergInequalityTwoHolder}
    |u|_q \le C \|u\|_p^\theta |u|_r^{1-\theta},
\end{equation}
It is not hard to see that the parameter condition (\ref{ParameterCondition02}) is a special case of parameter condition (\ref{ParameterCondition0}) of the traditional Gagliardo-Nirenberg inequality (\ref{GagliardoNirenbergInequality}).

In this paper, we establish an extended version of the classical Gagliardo-Nirenberg inequality (\ref{GagliardoNirenbergInequality}), in which Lebesgue norms are replaced by Hölder norms under suitable parameter conditions. Unlike previous results that primarily focused on seminorms, our work generalizes the inequality to full norms, significantly broadening its applicability. Moreover, we extend the parameter range to a larger interval, encompassing all cases covered by (\ref{GagliardoNirenbergInequality}), (\ref{GagliardoNirenbergInequalityOneHolder}), and (\ref{GagliardoNirenbergInequalityTwoHolder}), while also introducing new scenarios not previously addressed. This advancement not only unifies existing results but also provides a more comprehensive framework for analyzing functional inequalities in Sobolev and Hölder spaces. 

For convenience in exposition and proof, we slightly adjust the notation given by L. Nirenberg in \cite{nirenberg2011elliptic}. Now let us introduce our new notation. Define $X^p$ as a space that encompasses both Lebesgue and H\"older spaces:
\begin{align*}
    0<p<\infty:& \quad X^p=L^p, \\
    p=\infty:& \quad X^p=L^\infty, \\
    -\infty<p<0:& \quad X^p=C^{p_1,p_2},
\end{align*}
where $p_1=-\left[\frac{n}{p}+1\right]$, $p_2=-\frac{n}{p}-p_1$ when $p<0$. Define the norm of $X^p$ as:
\begin{align*}
    0<p<\infty:& & \|u\|_{X^p(\R^n)}&=\|u\|_{L^p(\R^n)}=\l\int_{\R^n}|u|^p\d x\r^\frac{1}{p}, \\
    p=\infty:& & \|u\|_{X^p(\R^n)}&=\|u\|_{L^\infty(\R^n)}=\esssup_{x\in\R^n}|u(x)|, \\
    -\infty<p<0:& & \|u\|_{X^p(\R^n)}&=\|u\|_{C^{p_1,p_2}(\R^n)}
    =\|u\|_{C^{p_1}(\R^n)}+\left[D^{p_1} u\right]_{C^{0,p_2}(\R^n)} \\
    & & &=\max_{|\alpha|\le p_1}\sup_{x\in\R^n}|D^\alpha u|
    +\sum_{|\alpha|=p_1}\sup_{\substack{x,y\in\R^n \\ x\ne y}}\frac{|D^\alpha u(x)-D^\alpha u(y)|}{|x-y|^{p_2}}.
\end{align*}
This notation connects Hölder and Lebesgue spaces. Although functions in Hölder spaces are not necessarily integrable, assigning them a negative integrability parameter makes it compatible with the integrability parameter in Lebesgue spaces. The idea of this notation shares many similarities with the Morrey-Campanato spaces. The literature related to Morrey-Campanato spaces has been studied by various authors S. Campanato \cite{campanato1963proprieta}, F. John and L. Nirenberg \cite{john1961functions}, N.G. Meyers \cite{meyers1964mean}, where G. Stam proved the Interpolation Theorem for this space in \cite{stampacchia1965spaces}. However, the Morrey-Campanato space approaches the relationship between Sobolev and Hölder spaces through the lens of mean oscillation, which emphasizes regularity. In contrast, both our work and Nirenberg’s notation share the same perspective, focusing on extending integrability to unify these spaces. While Nirenberg achieves this by generalizing integrability conditions, our approach further extends this framework by broadening the parameter ranges and generalizing functional inequalities. This shared perspective not only aligns with Nirenberg’s framework but also strengthens the understanding of Sobolev and Hölder spaces as part of a unified structure, distinct from the regularity-based viewpoint of Morrey-Campanato spaces.

Similar to the Sobolev space, fix $-\infty<\frac{1}{p}<+\infty$ and let $k$ be a non-negative integer, if for each multi-index $\alpha$ with $\alpha\le k$,
\begin{equation*}
    D^\alpha u\in X^p(\R^n),
\end{equation*}
then we say
\begin{equation*}
    u\in X^{k,p}(\R^n).
\end{equation*}

Here is the main theorem of this paper.
\begin{theorem}\label{TheoremGagliardoNirenbergInequalityGeneral}
    Let $p,q,r\in(-\infty,0)\cup[1,+\infty)$, $k,l\in\mathbb{N}^+$, $k>l$ and $\frac{n}{p}\notin\{1,\cdots,k-l\}$, then for all $u\in C_0^\infty(\R^n)$ with $u\in X^{k,p}(\R^n)\cap X^r(\R^n)$, we have $u\in X^{l,q}(\R^n)$ and there exists a constant $C$ independent of $u$ such that
    \begin{equation}\label{GagliardoNirenbergInequalityGeneral}
        \|D^l u\|_{X^q(\R^n)} \le C \|D^k u\|_{X^p(\R^n)}^\theta \|u\|_{X^r(\R^n)}^{1-\theta},
    \end{equation}
    where
    \begin{equation*}
        \frac{1}{q}-\frac{l}{n} = \theta\cdot\l\frac{1}{p}-\frac{k}{n}\r+(1-\theta)\cdot\frac{1}{r},
    \end{equation*}
    for all $\theta$ in the interval
    \begin{equation*}
        \frac{l}{k}\le\theta\le 1.
    \end{equation*}
\end{theorem}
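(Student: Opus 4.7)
The plan is to reduce (\ref{GagliardoNirenbergInequalityGeneral}) to the classical Gagliardo--Nirenberg inequality (the case $p,q,r\in[1,+\infty)$) by handling each H\"older norm that appears via one of two interchange mechanisms: Morrey's embedding (when a H\"older norm appears on the left-hand side) and the interpolation lemma between Lebesgue and H\"older spaces established earlier in the paper (when a H\"older norm appears on the right-hand side). The guiding intuition is that the Nirenberg scale $\frac{1}{p}-\frac{k}{n}$ behaves homogeneously across the Lebesgue and H\"older regimes under the notation $X^p$, so the scaling identity in the theorem is preserved under each swap.

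Concretely, I would split into cases according to which of $p$, $q$, $r$ are negative. If $q<0$, set $q_1=-[n/q+1]$ and $q_2=-n/q-q_1$, and choose $s>n$ with $1-n/s=q_2$; Morrey's embedding then gives $|D^l u|_q\le C\|D^{l+q_1+1}u\|_s$, and a direct computation verifies the matching scaling $\frac{1}{s}-\frac{l+q_1+1}{n}=\frac{1}{q}-\frac{l}{n}$, so the left-hand side is replaced by a Lebesgue norm. If $p<0$ or $r<0$, the interpolation lemma rewrites $|D^k u|_p$ or $|u|_r$ as a geometric mean of Lebesgue (and possibly lower-order H\"older) norms of appropriate derivatives at matching scales; substituting these expressions into the right-hand side and collecting exponents reduces (\ref{GagliardoNirenbergInequalityGeneral}) to a consequence of the classical Gagliardo--Nirenberg inequality, applied iteratively when more than one of $p, q, r$ is negative.

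The main obstacle is the bookkeeping required to guarantee that, after each interchange, the induced exponent in the classical inequality still lies in the admissible range $[l/k,1]$ and that no intermediate norm lands at a critical Sobolev exponent. The hypothesis $n/p\notin\{1,\ldots,k-l\}$ serves precisely this purpose: it excludes the borderline spaces $W^{m,n/m}$ for $m=1,\ldots,k-l$, where the Morrey-type embedding into $L^\infty$ or a H\"older space fails (being replaced by a $\BMO$-type estimate with logarithmic loss) at some intermediate derivative order in the reduction chain. Away from these critical values, the three building blocks---classical Gagliardo--Nirenberg, Morrey's embedding, and the interpolation lemma---assemble cleanly to give (\ref{GagliardoNirenbergInequalityGeneral}) in full generality.
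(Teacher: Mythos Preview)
Your plan is genuinely different from the paper's, and as written it contains a directional gap that blocks the reduction you propose.

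The paper does \emph{not} reduce to the classical (all-Lebesgue) Gagliardo--Nirenberg inequality. It works entirely inside the unified $X^p$ scale: first the base case $(l,k)=(1,2)$ is established directly (Lemma~\ref{LemmaGagliardoNirenbergInequalityInitial}), using the generalized Sobolev/Morrey inequality~(\ref{SobolevInequalityGeneral}), the H\"older equality~(\ref{HolderEquality}), and the interpolation lemma~(\ref{interpolationInequality}) to handle the H\"older sub-cases. Then a double induction---on $k$ with $l=1$, and on $l$ via $(\tilde l,\tilde k)\mapsto(\tilde l+1,\tilde k+1)$---gives the endpoint $\theta=l/k$ for all $(l,k)$. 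Finally one interpolates between $\theta=l/k$ and $\theta=1$ using Lemma~\ref{LemmaInterpolation} once more. At every stage the interpolation lemma is applied to the \emph{left-hand} quantity, where the inequality points the right way.

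The difficulty in your proposal lies in the step ``the interpolation lemma rewrites $|D^k u|_p$ or $|u|_r$ as a geometric mean \ldots; substituting these expressions into the right-hand side \ldots\ reduces to the classical inequality.'' Lemma~\ref{LemmaInterpolation} is a one-sided estimate: it bounds the intermediate norm from \emph{above}. Substituting an upper bound for a factor on the right of~(\ref{GagliardoNirenbergInequalityGeneral}) only weakens the inequality you are trying to prove; it does not let you deduce it from the classical one. Nor can you pass from a H\"older norm on the right to a Lebesgue norm by Sobolev/Morrey in the useful direction: Theorem~\ref{TheoremSobolevArbitraryOrder} gives $|D^{k'}u|_{p'}\lesssim|D^k u|_p$ only with $k'<k$ and $1/p'<1/p$, so starting from $p<0$ drives $p'$ further into the H\"older range, not into $[1,\infty)$; the H\"older equality~(\ref{HolderEquality}) iterates the same way and stops one step short of Lebesgue. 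The crude bound $|D^k u|_p\ge\|D^{k+p_1}u\|_\infty$ does point the right way, but it lands at a different Nirenberg scale (off by $p_2/n$), so the resulting $\theta'$ does not match the target $\theta$, and in particular the bottom of the range $\theta\in[l/k,1]$ is not recovered.

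Your treatment of $q<0$ via Morrey is essentially on track, though as stated it controls only the top seminorm $[D^{l+q_1}u]_{C^{0,q_2}}$; the lower-order sup norms contained in $|D^l u|_q$ require the iterated estimate~(\ref{SobolevInequalityArbitraryOrder}) as well. But the right-hand-side cases $p<0$ or $r<0$ need a different mechanism than the one you describe; this is exactly what the paper's induction supplies.
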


This paper is organized as follows: In section 2, we will use the new notation to present a unified form of the Sobolev inequality and the Morrey's inequality. The main purpose is to facilitate subsequent calculations. In Section 3, we established a crucial interpolation lemma related to both Lebesgue and H\"older spaces. This lemma plays a key role in problem-solving, and importantly, it shows that, provided the parameter conditions are maintained, the interpolation inequality still holds even when extended to H\"older spaces. In Section 4, we complete our conclusions using mathematical induction and the interpolation lemma.

\section{Sobolev Inequalities}

In the previous section, we introduced a new notation that unifies the Sobolev inequality and Morrey’s inequality into a single framework. This unified inequality extends the parameter $p$ to all negative values in the Hölder space, providing a broader perspective on their relationship. It will serve as a key tool in the proofs developed in later sections. Let us now verify this result:
\begin{theorem}\label{TheoremSobolevGeneral}
    Let $p\in(-\infty,0)\cup[1,n)\cup(n,+\infty)$, then for all $u\in C_0^\infty(\R^n)$ with $u\in X^{1,p}(\R^n)$, we have $u\in X^{p^\ast}(\R^n)$ and there exists a constant $C$ independent of $u$ such that
    \begin{equation}\label{SobolevInequalityGeneral}
        \|u\|_{X^{p^\ast}(\R^n)}\le C\|D u\|_{X^p(\R^n)},
    \end{equation}
    where
    \begin{equation*}
        \frac{1}{p^\ast}=\frac{1}{p}-\frac{1}{n}.
    \end{equation*}
\end{theorem}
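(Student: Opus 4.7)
The plan is to split the proof into three regimes according to the sign and magnitude of $p$, treating the statement as a unified restatement of the classical Sobolev and Morrey inequalities together with an essentially definitional observation in the H\"older range.

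First, for $1\le p<n$, the conjugate exponent $p^\ast=\frac{np}{n-p}$ is positive, so $|u|_{p^\ast}=\|u\|_{L^{p^\ast}(\R^n)}$ and $|Du|_p=\|Du\|_{L^p(\R^n)}$, and the inequality reduces to the classical Gagliardo-Nirenberg-Sobolev inequality, which I would simply invoke from a standard reference such as \cite{evans2010partial}.

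Second, for $p>n$, I would compute from the definitions that $-1<\frac{n}{p^\ast}<0$, giving $p_1^\ast=0$ and $p_2^\ast=1-\frac{n}{p}$, so $X^{p^\ast}=C^{0,1-n/p}(\R^n)$. The H\"older seminorm contribution to $|u|_{p^\ast}$ is controlled by $\|Du\|_p$ via the standard Morrey inequality; the $L^\infty$ contribution is then obtained from the pointwise Morrey estimate applied on a fixed large ball, combined with the integrability/decay information implicit in $u\in X^{1,p}(\R^n)$.

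Third, for $p<0$, I would set $\alpha=-\frac{n}{p}>0$ and read off from the definitions that $p_1=\lfloor\alpha\rfloor$ and $p_2=\alpha-\lfloor\alpha\rfloor$; since $\frac{1}{p^\ast}=\frac{1}{p}-\frac{1}{n}$ gives $-\frac{n}{p^\ast}=\alpha+1$, one has $p_1^\ast=p_1+1$ and $p_2^\ast=p_2$. Thus $X^p=C^{p_1,p_2}(\R^n)$ and $X^{p^\ast}=C^{p_1+1,p_2}(\R^n)$, and the hypothesis $Du\in X^p$ is precisely the assertion that every $(p_1+1)$th derivative of $u$ is bounded and H\"older continuous of exponent $p_2$. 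Combined with $u\in X^p$, which supplies the remaining sup-norm control of $u$ itself, this yields $u\in X^{p^\ast}$ and the norm bound $|u|_{p^\ast}\le C|Du|_p$ essentially by comparison of the two definitions.

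The main obstacle I expect is the careful bookkeeping of integer and fractional parts that ties the pair $(p_1,p_2)$ to $(p_1^\ast,p_2^\ast)$ in the H\"older regime, together with upgrading the classical Morrey inequality from a seminorm bound to a bound on the full $C^{0,1-n/p}$ norm in the case $p>n$; the excluded value $p=n$ corresponds to the borderline case $p^\ast=\infty$ where this form of the Sobolev embedding notoriously fails.
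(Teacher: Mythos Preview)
Your three-case split and the identifications in each case are exactly the paper's proof. The only divergence is in the $p<0$ case: the paper does not invoke the hypothesis $u\in X^p$ at all but simply asserts the \emph{equality} $|u|_{p^\ast}=|Du|_p$ (labelled the ``H\"older equality''), read off directly from $p_1^\ast=p_1+1$ and $p_2^\ast=p_2$. Your instinct that $\sup|u|$ needs separate handling is sound---strictly speaking $\|u\|_{C^{p_1+1}}$ includes the zeroth-order term $\sup|u|$ while $\|Du\|_{C^{p_1}}$ does not---but your proposed fix of borrowing $\sup|u|$ from $|u|_p$ only yields the membership $u\in X^{p^\ast}$, not a bound of $|u|_{p^\ast}$ by $|Du|_p$ alone (a nonzero constant is a counterexample to the literal inequality when $p<0$). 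The paper's argument glosses over exactly the point you flagged; in effect both proofs are treating the $X^p$-norms modulo that zeroth-order term in the H\"older regime.
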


\begin{proof}
    When $p\in[1,n)$, both $p$ and $p^\ast$ are positive, the inequality (\ref{SobolevInequalityGeneral}) is the standard Sobolev inequality
    \begin{equation*}
        \|u\|_{p^\ast}\le C\|D u\|_p.
    \end{equation*}
    When $p\in(n,+\infty)$, $p$ is positive and $p^\ast$ is negative, direct calculation show us
    \begin{align*}
        p^\ast_1&=-\left[\frac{n}{p^\ast}+1\right]=0, \\
        p^\ast_2&=-\frac{n}{p^\ast}-p^\ast_1=1-\frac{n}{p}.
    \end{align*}
    In this case the inequality (\ref{SobolevInequalityGeneral}) becomes the Morrey's inequality:
    \begin{equation*}
        \|u\|_{C^{0,1-\frac{n}{p}}}\le C\|D u\|_p.
    \end{equation*}
    When $p\in(-\infty,0)$, $p^\ast<0$ is negative. We find
    \begin{align*}
        p^\ast_1&=-\left[\frac{n}{p^\ast}+1\right]
        =-\left[\frac{n}{p}\right]
        =p_1+1, \\
        p^\ast_2&=-\frac{n}{p^\ast}-p^\ast_1
        =1-\frac{n}{p}-(p_1+1)
        =-\frac{n}{p}-p_1
        =p_2.
    \end{align*}
    Therefore, when $p$ is negative, we obtain the H\"older equality 
    \begin{equation}\label{HolderEquality}
        \begin{aligned}
            \|u\|_{X^{p^\ast}(\R^n)}&=\|u\|_{C^{p^\ast_1}(\R^n)}
            +\left[D^{p_1^\ast} u\right]_{C^{0,p_2^\ast}(\R^n)} \\
            &=\|Du\|_{C^{p_1}(\R^n)}+\left[D^{p_1}(Du)\right]_{C^{0,p_2}(\R^n)}
            =\|D u\|_{X^p(\R^n)}.
        \end{aligned}
    \end{equation}    
    Which completes the proof.
\end{proof}

\begin{remark}
    We observe that when $p<0$, the Sobolev inequality transforms into an equality under these conditions, indicating the equivalence of the spaces $X^{p^\ast}(\R^n)$ and $X^{1,p}(\R^n)$ in this framework. However, in the theorem, we do not separate this case but instead represent it uniformly as an inequality. The reason is to maintain consistency for subsequent calculations, and because our main focus is on cases that span both H\"older and Lebesgue spaces.
\end{remark}

\begin{remark}
    If we suppose there is a space $Y$ with the norm $\|\cdot\|_Y$ and assume
    \begin{equation*}
        \|u\|_{X^\infty}:=\|u\|_Y.
    \end{equation*}
    We would like to apply it on the unified format notation in order to extend the range of $p$ in Theorem \ref{TheoremSobolevGeneral}. Thus we must have:
    \begin{gather*}
        \|u\|_Y=\|u\|_{X^\infty}\le C\|Du\|_{X^n}, \\
        \|Du\|_{L^\infty}\le\|u\|_{X^{-n}}\le C\|Du\|_{X^\infty}=C\|Du\|_Y.
    \end{gather*}
    These two inequalities show that
    \begin{equation*}
        W^{1,n}\hookrightarrow Y\hookrightarrow L^\infty.
    \end{equation*}
    However, this is impossible. Therefore, we cannot provide a perfect notation for borderline cases.
\end{remark}

By iteratively applying Theorem \ref{TheoremSobolevGeneral}, we can express the generic Sobolev inequality involving derivatives of any order more compactly.

\begin{theorem}\label{TheoremSobolevArbitraryOrder}
    Let $p\in(-\infty,0)\cup[1,+\infty)$, $k,l\in\N^+$, $k>l$ and $\frac{n}{p}\notin\{1,\cdots,k-l\}$. Then for all $u\in C_0^\infty(\R^n)$ with $u\in X^{k,p}(\R^n)$, we have $u\in X^{l,q}(\R^n)$ and there exists a constant $C$ independent of $u$ such that
    \begin{equation}\label{SobolevInequalityArbitraryOrder}
        \|D^l u\|_{X^q(\R^n)}\le C\|D^k u\|_{X^p(\R^n)},
    \end{equation}
    where
    \begin{equation*}
        \frac{1}{q}-\frac{l}{n}=\frac{1}{p}-\frac{k}{n}.
    \end{equation*}
\end{theorem}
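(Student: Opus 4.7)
The plan is to prove this by induction on the difference $k - l$, using Theorem \ref{TheoremSobolevGeneral} as the one-step workhorse. The strategy is to peel off one derivative at a time: each application of Theorem \ref{TheoremSobolevGeneral} converts a bound on $k-j$ derivatives in $X^{p_j}$ into a bound on $k-j-1$ derivatives in $X^{p_{j+1}}$, where $\frac{1}{p_{j+1}} = \frac{1}{p_j} - \frac{1}{n}$, and iterating this $k-l$ times lands in $X^{l,q}$.

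For the base case $k - l = 1$, I would apply Theorem \ref{TheoremSobolevGeneral} directly to $D^l u$, regarded as an element of $X^{1,p}(\R^n)$. The hypothesis $\frac{n}{p} \ne 1$ is precisely what avoids the borderline case $p = n$ in Theorem \ref{TheoremSobolevGeneral}, so $|D^l u|_q \le C |D^{l+1} u|_p = C |D^k u|_p$ with $\frac{1}{q} = \frac{1}{p} - \frac{1}{n}$ follows at once.

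For the inductive step, assuming the statement for difference $m$, and given $k - l = m+1$, I would first apply Theorem \ref{TheoremSobolevGeneral} to $D^{k-1} u$ with exponent $p$, producing $|D^{k-1} u|_{p^\ast} \le C |D^k u|_p$ where $\frac{1}{p^\ast} = \frac{1}{p} - \frac{1}{n}$. Then I would invoke the induction hypothesis with top order $k-1$, bottom order $l$, and exponent $p^\ast$ to obtain $|D^l u|_q \le C |D^{k-1} u|_{p^\ast}$. Composing the two inequalities gives the desired estimate, and the exponents telescope cleanly:
$$\frac{1}{q} - \frac{l}{n} = \frac{1}{p^\ast} - \frac{k-1}{n} = \frac{1}{p} - \frac{k}{n}.$$

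The only genuine check is that the exclusion hypothesis and admissibility of parameters both propagate under the iteration. Since $\frac{n}{p^\ast} = \frac{n}{p} - 1$, the requirement $\frac{n}{p^\ast} \notin \{1,\cdots,k-l-1\}$ needed by the inductive hypothesis is equivalent to $\frac{n}{p} \notin \{2,\cdots,k-l\}$, which is supplied by the given exclusion $\frac{n}{p} \notin \{1,\cdots,k-l\}$. It also has to be verified case-by-case that $p^\ast \in (-\infty,0) \cup [1,+\infty)$: if $p \in [1,n)$ then $p^\ast > p \ge 1$; if $p > n$ then $p^\ast < 0$; if $p < 0$ then $p^\ast < 0$. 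The main obstacle is essentially nonexistent beyond this bookkeeping, since Theorem \ref{TheoremSobolevGeneral} already packages the Sobolev–Morrey–H\"older embedding into one unified step, and all the arithmetic of iterating it is driven by the identity $\frac{n}{p_{j+1}} = \frac{n}{p_j} - 1$.
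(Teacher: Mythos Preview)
Your proposal is correct and follows essentially the same approach as the paper: both iterate Theorem~\ref{TheoremSobolevGeneral} one derivative at a time, with the exclusion $\frac{n}{p}\notin\{1,\dots,k-l\}$ ensuring that no intermediate exponent hits the borderline $n$. The paper writes the iteration as a direct chain $|D^l u|_q\lesssim|D^{l+1}u|_{p_{k-l-1}}\lesssim\cdots\lesssim|D^k u|_p$ with $\frac{1}{p_{i+1}}=\frac{1}{p_i}-\frac{1}{n}$, whereas you phrase it as a formal induction on $k-l$ and are slightly more explicit about the admissibility check $p^\ast\in(-\infty,0)\cup[1,+\infty)$, but the content is identical.
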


\begin{proof}
    Assume $p_0=p$, let $p_{i+1}$ be the conjugate Sobolev index of $p_i$, i.e.
    \begin{equation*}
        \frac{1}{p_{i+1}}=\frac{1}{p_i}-\frac{1}{n} \quad  (i=0,1,\cdots,k-l-1).
    \end{equation*}
    Thus we have
    \begin{equation*}
        \frac{1}{p_{k-l}}=\frac{1}{p_{k-l-1}}-\frac{1}{n}
        =\cdots=\frac{1}{p}-\frac{k-l}{n}=\frac{1}{q}.
    \end{equation*}
    Since $\frac{n}{p}\notin\{1,\cdots,k-l\}$, consequently $p_i\ne n$, then repeatedly applying (\ref{SobolevInequalityGeneral}) we obtain
    \begin{equation*}
        \|D^l u\|_{X^q(\R^n)}
        \lesssim\|D^{l+1} u\|_{X^{p_{k-l-1}}(\R^n)}
        \lesssim\cdots
        \lesssim\|D^k u\|_{X^p(\R^n)}.
    \end{equation*}
\end{proof}

\section{Interpolation Lemma}

In \cite{nirenberg2011elliptic}, L. Nirenberg mentioned an interpolation lemma but did not provide its complete proof. In this section, we will establish a crucial interpolation inequality that spans Lebesgue and Hölder spaces, filling this gap with a foundational yet detailed proof. This inequality demonstrates that interpolation can be continuously applied between these spaces, effectively bridging their connection. The result is not only of independent interest but also plays a vital role in the proof of our main theorem.

\begin{lemma}\label{LemmaInterpolation}
    Assume $-\infty<\lambda<\mu<\nu<\infty$ and $u\in C_0^\infty(\R^n)$, then
    \begin{equation}\label{interpolationInequality}
        \|u\|_{X^\frac{1}{\mu}(\R^n)}\le C\|u\|_{X^\frac{1}{\lambda}(\R^n)}^{\eta}\|u\|_{X^\frac{1}{\nu}(\R^n)}^{1-\eta}
    \end{equation}
    where
    \begin{equation*}
        \mu=\eta\lambda+(1-\eta)\nu,
    \end{equation*}
    and $C$ is independent of $u$.
\end{lemma}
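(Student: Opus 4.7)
The plan is to split into cases based on the signs of $\lambda$ and $\nu$, which determine whether each of $|u|_{1/\lambda}$, $|u|_{1/\mu}$, $|u|_{1/\nu}$ is a Lebesgue norm ($1/p>0$) or a H\"older norm ($1/p<0$). Three regimes appear: all indices on the Lebesgue side ($\lambda\ge 0$), all on the H\"older side ($\nu\le 0$), and the genuinely mixed regime $\lambda<0<\nu$. In each regime the inequality reduces to a pointwise identity followed by a supremum or integration step, with the linear relation $\mu=\eta\lambda+(1-\eta)\nu$ ensuring that the natural scaling weights come out correctly.

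In the pure Lebesgue regime the inequality is Lyapunov's $L^p$ log-convexity, obtained by writing $|u|^{1/\mu}=|u|^{\eta/\lambda}\cdot|u|^{(1-\eta)/\nu}$ and applying H\"older with conjugate exponents $1/\eta$ and $1/(1-\eta)$; the convention $1/0=\infty$ absorbs the $L^\infty$ endpoint. In the pure H\"older regime I first treat the sub-case where the integer parts $p_1^{(\lambda)}$, $p_1^{(\mu)}$, $p_1^{(\nu)}$ coincide, so that $\mu=\eta\lambda+(1-\eta)\nu$ forces $p_2^{(\mu)}=\eta p_2^{(\lambda)}+(1-\eta)p_2^{(\nu)}$; the pointwise identity
\[
\frac{|D^{\beta}u(x)-D^{\beta}u(y)|}{|x-y|^{p_2^{(\mu)}}}=\left(\frac{|D^{\beta}u(x)-D^{\beta}u(y)|}{|x-y|^{p_2^{(\lambda)}}}\right)^{\eta}\left(\frac{|D^{\beta}u(x)-D^{\beta}u(y)|}{|x-y|^{p_2^{(\nu)}}}\right)^{1-\eta}
\]
then yields the H\"older-seminorm interpolation after taking the supremum, while the $\|u\|_{C^{p_1}}$ pieces are handled term-by-term by the corresponding factors in $|u|_{1/\lambda}$ and $|u|_{1/\nu}$. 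The boundary case $\nu=0$, where $|u|_{1/\nu}=\|u\|_\infty$, is included by replacing the $\nu$-factor in the identity with $|u(x)-u(y)|\le 2\|u\|_\infty$.

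The heart of the proof is the mixed regime, which rests on the bridging inequality $\|u\|_\infty\le C|u|_{1/\lambda}^{\eta}|u|_{1/\nu}^{1-\eta}$ corresponding to $\mu=0$, with $0=\eta\lambda+(1-\eta)\nu$. For $\lambda\in(-1/n,0)$, so that $|u|_{1/\lambda}$ controls the seminorm $[u]_{C^{0,-n\lambda}}$, I use the standard localization: at a near-maximum $x_0$ of $|u|$, H\"older continuity forces $|u(x)|\ge\tfrac14\|u\|_\infty$ on a ball of radius $r\asymp(\|u\|_\infty/|u|_{1/\lambda})^{-1/(n\lambda)}$, and integrating $|u|^{1/\nu}$ over this ball produces
\[
\|u\|_\infty^{1/\nu-1/\lambda}\le C\,|u|_{1/\nu}^{1/\nu}\,|u|_{1/\lambda}^{-1/\lambda},
\]
whose exponents, under the condition $0=\eta\lambda+(1-\eta)\nu$, are exactly $\eta$ and $1-\eta$. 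For more negative $\lambda$ (with $p_1^{(\lambda)}\ge 1$) I first invoke Theorem~\ref{TheoremSobolevArbitraryOrder} to pass from $|u|_{1/\lambda}$ to a less regular $|u|_{1/\lambda'}$ with $\lambda'\in(-1/n,0)$ and then apply the basic bridge. A general mixed $\mu\ne 0$ is obtained by composing the appropriate pure-regime interpolation between $|u|_{1/\mu}$, $\|u\|_\infty$, and whichever of $|u|_{1/\lambda}$ or $|u|_{1/\nu}$ lies on the same side of zero as $\mu$, and then substituting the bridging bound; a short computation from $\mu=\eta\lambda+(1-\eta)\nu$ verifies that the composed exponents collapse to $\eta$ and $1-\eta$.

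The main obstacle I anticipate is precisely this final bookkeeping, making sure that the cascaded interpolations and the Sobolev reduction combine to exactly $\eta$ and $1-\eta$, together with the case where the integer parts $p_1^{(\lambda)}$, $p_1^{(\mu)}$, $p_1^{(\nu)}$ differ, forcing the H\"older norm to split into several scale-different pieces that must each be interpolated and reassembled without breaking the scaling balance. Each individual step (H\"older's inequality, the pointwise H\"older identity, and the ball-localization bound) is elementary; the delicate part is stitching them together.
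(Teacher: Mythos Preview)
Your three-regime split and the arguments for the pure Lebesgue case, the equal-integer-part H\"older sub-case, and the basic bridge at $\mu=0$ with $\lambda\in(-1/n,0)$ all match the paper (its Cases~1, 2.2, and~3). The genuine gap is in the two places you yourself flag as obstacles but do not resolve. When $p_1^{(\lambda)}>p_1^{(\nu)}$, your pointwise identity breaks down because the two H\"older quotients involve derivatives of different orders and cannot be multiplied together. The paper supplies two separate ingredients here: a Taylor-expansion step giving $\|u\|_{C^{k+1}}\lesssim\|u\|_{C^k}^{1/2}\|u\|_{C^{k+2}}^{1/2}$ for the integer-order nodes, and a directional-integration argument (find a near-extremal point of $D_iu$, integrate along $e_i$ over a calibrated interval) for the transitional case $p_1^{(\lambda)}=p_1^{(\nu)}+1$ with $\mu$ at the Lipschitz endpoint. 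These are Cases~2.1 and~2.3, and they are glued to Case~2.2 via the reiteration Lemma~\ref{LemmaTransitivityOfInterpolation}. Neither idea appears in your plan, and the ``split into scale-different pieces and reassemble'' description is not yet a method.

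The second gap is downstream of the first. For $\mu=0$ with $\lambda<-1/n$, your proposed reduction via Theorem~\ref{TheoremSobolevArbitraryOrder} does not give a bound of $|u|_{1/\lambda'}$ by $|u|_{1/\lambda}$; and if you instead use the bare H\"older embedding $|u|_{1/\lambda'}\lesssim|u|_{1/\lambda}$ and then the basic bridge, you obtain $\|u\|_\infty\lesssim|u|_{1/\lambda}^{\eta'}|u|_{1/\nu}^{1-\eta'}$ with $\eta'=\nu/(\nu-\lambda')>\nu/(\nu-\lambda)=\eta$, so the exponent is wrong and cannot be repaired by interpolating against the trivial bound $\|u\|_\infty\le|u|_{1/\lambda}$ (that would require a negative weight). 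The paper instead extends the bridge by applying Lemma~\ref{LemmaTransitivityOfInterpolation} between the basic $\mu=0$ bridge and the \emph{full} pure-H\"older interpolation of Case~2 --- precisely the piece you are missing. Once Case~2 is complete, the same reiteration device you already invoke for $\mu\ne 0$ also closes the $\mu=0$ case for arbitrary $\lambda<0$.
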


\begin{remark}
     Some special cases of this lemma have been studied. In particular
     \begin{enumerate}
         \item When $\lambda\ge0$, all three norms are Lebesgue norms, making inequality (\ref{interpolationInequality}) essentially the interpolation inequality for Lebesgue spaces.
         \item When $\nu\le0$, all three norms are H\"older norms, you can refer to the relevant studies in Appendix 1 of Chapter 6 in D. Gilbarg, and N. S. Trudinger's authoritative book \cite{gilbarg1977elliptic}.
     \end{enumerate}
\end{remark}

Before proving the interpolation lemma, let’s first examine two lemmas. One of them illustrate the transitivity of interpolation, which is a straightforward application of the Reiteration Theorem found by A. Calder\'on in \cite{calderon1964intermediate}. Readers may also refer the theorem in H. Triebel's book \cite{triebel1995interpolation} and A. Lunardi's books \cite{lunardi2012analytic} and \cite{lunardi2018interpolation}; it presents a more general form within Interpolation Theory. This lemma allows us to focus on proving key parameter nodes, while the general case can be directly deduced from it, we will frequently employ this method in subsequent proofs.

\begin{lemma}\label{LemmaTransitivityOfInterpolation}
    For $-\infty<\mu_0<\mu_1<\mu_2<\mu_3<\infty$, $i=1,2$, assume we have the interpolation inequalities
    \begin{equation*}
        \|u\|_{X^\frac{1}{\mu_i}(\R^n)}
        \lesssim\|u\|_{X^\frac{1}{\mu_{i-1}}(\R^n)}^{\eta_i}
        \|u\|_{X^\frac{1}{\mu_{i+1}}(\R^n)}^{1-\eta_i},
    \end{equation*}
    where
    \begin{equation*}
        \mu_i=\eta_i\mu_{i-1}+(1-\eta_i)\mu_{i+1}.
    \end{equation*}
    Then for $j=1,2$, we have the interpolation inequalities
    \begin{equation*}
        \|u\|_{X^\frac{1}{\mu_j}(\R^n)}
        \lesssim \|u\|_{X^\frac{1}{\mu_0}(\R^n)}^{\theta_j}
        \|u\|_{X^\frac{1}{\mu_3}(\R^n)}^{1-\theta_j},
    \end{equation*}
    where
    \begin{equation*}
        \mu_j=\theta_j\mu_0+(1-\theta_j)\mu_3.
    \end{equation*}
\end{lemma}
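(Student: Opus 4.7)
The plan is to chain the two hypothesized interpolation inequalities together and solve the resulting multiplicative estimate for the norm we want; this is the direct algebraic manifestation of Calder\'on's reiteration theorem. I would treat the case $j=1$ in detail and note that $j=2$ is strictly symmetric.

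For $j=1$, starting from the hypothesis with $i=1$,
\[
|u|_\frac{1}{\mu_1} \lesssim |u|_\frac{1}{\mu_0}^{\eta_1}|u|_\frac{1}{\mu_2}^{1-\eta_1},
\]
I would substitute the hypothesis with $i=2$ directly into the $|u|_{1/\mu_2}$ factor on the right-hand side, obtaining
\[
|u|_\frac{1}{\mu_1} \lesssim |u|_\frac{1}{\mu_0}^{\eta_1}\,|u|_\frac{1}{\mu_1}^{(1-\eta_1)\eta_2}\,|u|_\frac{1}{\mu_3}^{(1-\eta_1)(1-\eta_2)}.
\]
Assuming $|u|_{1/\mu_1}$ is finite and positive, dividing by the $|u|_{1/\mu_1}^{(1-\eta_1)\eta_2}$ factor and raising both sides to the power $1/(1-(1-\eta_1)\eta_2)$ gives the claimed inequality with
\[
\theta_1 = \frac{\eta_1}{1-(1-\eta_1)\eta_2}.
\]
The case $j=2$ is entirely parallel: apply the $i=2$ inequality first, then substitute the $i=1$ inequality into the $|u|_{1/\mu_1}$ factor and solve, yielding $\theta_2 = \eta_1\eta_2/(1-(1-\eta_1)\eta_2)$.

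What remains is a purely algebraic check that these $\theta_j$ are exactly the convex-combination weights making $\mu_j = \theta_j\mu_0 + (1-\theta_j)\mu_3$. Substituting $\mu_2 = \eta_2\mu_1 + (1-\eta_2)\mu_3$ into $\mu_1 = \eta_1\mu_0 + (1-\eta_1)\mu_2$ and solving for $\mu_1$ reproduces precisely the $\theta_1$ above, and the analogous manipulation handles $\theta_2$. The assumption $\mu_0<\mu_1<\mu_2<\mu_3$ forces $\eta_1,\eta_2 \in (0,1)$, so the denominator $1-(1-\eta_1)\eta_2$ is strictly positive and all exponents are well-defined. The only bookkeeping nuisance I expect is the handling of degenerate cases where one of the intermediate norms vanishes or is infinite; these are dispatched by a short case split (if $|u|_{1/\mu_1}=0$ the conclusion is trivial, and if it is infinite then the hypothesized inequalities force the conclusion to be vacuous as well). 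No genuine analytic obstruction arises; the entire lemma is an algebraic consequence of the two hypothesized estimates, which is why the author calls it a straightforward application of the reiteration theorem.
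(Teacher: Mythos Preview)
Your argument is correct and is essentially identical to the paper's: both chain the $i=1$ estimate with the $i=2$ estimate, absorb the resulting $|u|_{1/\mu_1}^{(1-\eta_1)\eta_2}$ factor into the left-hand side, and arrive at $\theta_1=\eta_1/(1-(1-\eta_1)\eta_2)=\eta_1/(1-\eta_2+\eta_1\eta_2)$, with the paper likewise treating only $j=1$ and declaring $j=2$ similar. Your handling of the degenerate cases and the positivity of the denominator is slightly more explicit than the paper's ``simplify it and we obtain the interpolation inequality,'' but there is no substantive difference in approach.
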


\begin{proof}
    We only prove for $j=1$, the proof of the other case is similar. From the condition we have 
    \begin{align*}
        \mu_1&=\eta_1\mu_0+(1-\eta_1)\mu_2 \\
        &=\eta_1\mu_0+(1-\eta_1)(\eta_2\mu_1+(1-\eta_2)\mu_3) \\
        &=\eta_1\mu_0+(1-\eta_1)\eta_2\mu_1+(1-\eta_1)(1-\eta_2)\mu_3.
    \end{align*}
    Therefore
    \begin{equation*}
        \mu_1=\frac{\eta_1}{1-\eta_2+\eta_1\eta_2}\mu_0
        +\l 1-\frac{\eta_1}{1-\eta_2+\eta_1\eta_2}\r\mu_3.
    \end{equation*}
    Choose
    \begin{equation*}
        \theta_1=\frac{\eta_1}{1-\eta_2+\eta_1\eta_2}.
    \end{equation*}
    Then
    \begin{equation*}
        \|u\|_{X^\frac{1}{\mu_1}(\R^n)}
        \lesssim\|u\|_{X^\frac{1}{\mu_0}(\R^n)}^{\eta_1}
        \|u\|_{X^\frac{1}{\mu_2}(\R^n)}^{1-\eta_1}
        \lesssim\|u\|_{X^\frac{1}{\mu_0}(\R^n)}^{\eta_1}
        \|u\|_{X^\frac{1}{\mu_1}(\R^n)}^{\eta_2(1-\eta_1)}
        \|u\|_{X^\frac{1}{\mu_3}(\R^n)}^{(1-\eta_1)(1-\eta_2)}.
    \end{equation*}
    Simplify it and we obtain the interpolation inequality in Lemma.
\end{proof}

The next lemma addresses interpolation problems related to summation in Hölder norms.

\begin{lemma}\label{LemmaInterpolationConcerningSums}
    Assume $a,b,c,d>0$ and $0<\eta<1$. Then
    \begin{equation}\label{InterpolationConcerningSums}
        a^{\eta}b^{1-\eta}+c^{\eta}d^{1-\eta}
        \le(a+c)^{\eta}(b+d)^{1-\eta}.
    \end{equation}
\end{lemma}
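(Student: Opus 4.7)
The plan is to reduce the inequality to the standard Young inequality by normalization. I would divide both sides of (\ref{InterpolationConcerningSums}) by the right-hand side $(a+c)^{\eta}(b+d)^{1-\eta}$, which is strictly positive, and then set
\[
    s = \frac{a}{a+c}, \qquad t = \frac{b}{b+d},
\]
so that $1-s = c/(a+c)$ and $1-t = d/(b+d)$, with $s,t \in (0,1)$. The inequality to prove then becomes the purely scalar statement
\[
    s^{\eta} t^{1-\eta} + (1-s)^{\eta}(1-t)^{1-\eta} \le 1.
\]

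The key step is to apply Young's inequality $x^{\eta} y^{1-\eta} \le \eta x + (1-\eta) y$ (valid for $x,y \ge 0$ and $0<\eta<1$) to each of the two terms on the left. This yields
\[
    s^{\eta} t^{1-\eta} \le \eta s + (1-\eta) t, \qquad (1-s)^{\eta}(1-t)^{1-\eta} \le \eta(1-s) + (1-\eta)(1-t).
\]
Adding these two bounds, the right-hand sides collapse to $\eta + (1-\eta) = 1$, which is exactly what is needed.

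An equivalent route, which I might mention as an alternative, is to recognize (\ref{InterpolationConcerningSums}) as a two-term instance of the discrete H\"older inequality with conjugate exponents $1/\eta$ and $1/(1-\eta)$ applied to the vectors $(a^{\eta},c^{\eta})$ and $(b^{1-\eta},d^{1-\eta})$; this gives the estimate in one line. There is no genuine obstacle here: the only thing to be careful about is the positivity assumption $a,b,c,d>0$, which ensures that the normalizing factor $(a+c)^{\eta}(b+d)^{1-\eta}$ does not vanish and that all the powers are well defined; the inequality extends by continuity to the boundary case where some of the variables are zero.
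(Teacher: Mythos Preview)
Your argument is correct and is essentially identical to the paper's proof: the paper also divides through by $(a+c)^{\eta}(b+d)^{1-\eta}$ and applies Young's inequality to each of the two resulting terms, then adds. Your alternative remark via the discrete H\"older inequality is a valid one-line reformulation of the same computation.
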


\begin{proof}    
    The proof is simple. From Young's inequality we have
    \begin{align*}
        \l\frac{a}{a+c}\r^{\eta}\l\frac{b}{b+d}\r^{1-\eta}&
        \le\eta\cdot\frac{a}{a+c}+(1-\eta)\cdot\frac{b}{b+d}, \\
        \l\frac{c}{a+c}\r^{\eta}\l\frac{d}{b+d}\r^{1-\eta}&
        \le\eta\cdot\frac{c}{a+c}+(1-\eta)\cdot\frac{d}{b+d}.
    \end{align*}
    Combining these two inequalities and simplifying them, we conclude the proof.
\end{proof}

Now we can begin the proof of the interpolation lemma. According to Lemma \ref{LemmaTransitivityOfInterpolation}, we will focus on presenting the proof for all parameters being greater or less than zero, and the special case where the middle term $\frac{1}{\mu}$ is exactly equal to $0$, thereby naturally deducing the general case.

\begin{proof}[Proof of Lemma \ref{LemmaInterpolation}]
    \textbf{Case 1. $\lambda\ge0$.} In this case (\ref{interpolationInequality}) is merely the interpolation inequality for $L^p$, provable by applying Hölder's inequality. Set $\eta=\frac{\nu-\mu}{\nu-\lambda}$, then
    \begin{align*}
        \l\int_{\R^n}|u|^\frac{1}{\mu}\d x\r^\mu
        &=\l\int_{\R^n}|u|^\frac{\eta}{\mu}|u|^\frac{1-\eta}{\mu}\d x\r^\mu \\
        &\le\l\int_{\R^n}|u|^\frac{1}{\lambda}\d x\r^{\lambda\eta}
        \l\int_{\R^n}|u|^\frac{1}{\nu}\d x \r^{\nu(1-\eta)}.
    \end{align*}

    \textbf{Case 2. $\nu\le 0$.} In this case, all parameters are less than 0, so we only need to consider H\"older norms. For convenience, when $-\infty<\lambda<\mu<\nu\le 0$ and $k\in\N$, let us denote $C_\lambda$, $C_\mu$, $C_\nu$ and $C_k$ as
    \begin{equation*}
        C_\lambda=[u]_{C^{\lambda_1,\lambda_2}}, \quad
        C_\mu=[u]_{C^{\mu_1,\mu_2}}, \quad
        C_\nu=[u]_{C^{\nu_1,\nu_2}}, \quad
        C_k=\|u\|_{C^k}.
    \end{equation*}
    The parameters $\lambda_1,\lambda_1,\mu_1,\mu_2,\nu_1,\nu_2$ are defined as follows:
    \begin{align*}
        &\lambda_1=-\left[n\lambda+1\right], \quad \lambda_2=-n\lambda-\lambda_1, \\
        &\mu_1=-\left[n\mu+1\right], \quad \mu_2=-n\mu-\mu_1, \\
        &\nu_1=-\left[n\nu+1\right], \quad \nu_2=-n\nu-\nu_1.
    \end{align*}
    It is easy to check
    \begin{align*}
        \|u\|_{X^\frac{1}{\lambda}(\R^n)}&=C_{\lambda_1}+C_\lambda, \\
        \|u\|_{X^\frac{1}{\mu}(\R^n)}&=C_{\mu_1}+C_\mu, \\
        \|u\|_{X^\frac{1}{\nu}(\R^n)}&=C_{\nu_1}+C_\nu.
    \end{align*}
    Moreover, $\lambda<0$ ensure the continuity of function, when $\nu=0$ we have
    \begin{equation*}
        \|u\|_{L^\infty(\R^n)}=\|u\|_{C^0(\R^n)}=:\|u\|_{C^{0,0}(\R^n)}.
    \end{equation*}
    Based on the conditions, it is clear that
    \begin{equation*}
        \lambda_1\ge\mu_1\ge\nu_1.
    \end{equation*}    
    
    \noindent
    We divide the proof of this case into three sub-cases.

    \textbf{Case 2.1. Interpolation of $C_k$.} First assume $u\in C^2(\R^n)$, then from Taylor formula, we have
    \begin{equation*}
        u(y)-u(x)=Du(x)\cdot(y-x)+\frac{1}{2}(y-x)D^2 u\l(1-\theta)x+\theta y\r(y-x)^T.
    \end{equation*}
    For each direction $e_i(i=1,\cdots,n)$ and $h>0$,
    \begin{equation*}
        |u(x+he_i)-u(x)-D_i u(x)h|
        \le\frac{1}{2}\|D_{ii} u(x)\|_{L^\infty} h^2
        \le\frac{1}{2}\|D^2 u(x)\|_{L^\infty} h^2,
    \end{equation*}
    so that
    \begin{align*}
        \|D_i u(x)\|_{L^\infty}&\le\frac{2\|u(x)\|_{L^\infty}}{h}+\frac{1}{2}\|D^2 u\|_{L^\infty} h \\
        &\le 2(\|u(x)\|_{L^\infty})^\frac{1}{2}(\|D^2 u(x)\|_{L^\infty})^\frac{1}{2}.
    \end{align*}
    So we have
    \begin{align*}
        C_1&=\max(\|u\|_{L^\infty},\|Du\|_{L^\infty}) \\
        &\le(\|u\|_{L^\infty})^\frac{1}{2}
        \left[\max\l(\|u\|_{L^\infty}),2(\|D^2 u\|_{L^\infty})\r\right]^\frac{1}{2} \\
        &\lesssim C_0^\frac{1}{2} C_2^\frac{1}{2}.
    \end{align*}
    For $u\in C^{k+2}(\R^n)$, substituting $D^k u$ for $u$ directly yields
    \begin{equation*}
        C_{k+1}\lesssim C_k^\frac{1}{2} C_{k+2}^\frac{1}{2}.
    \end{equation*}
    Combine with Lemma \ref{LemmaTransitivityOfInterpolation} we know that for any $k_1,k_2,k_3\in\N$ and $k_1<k_2<k_3$, if $u\in C^{k_3}(\R^n)$,
    \begin{equation}\label{InterpolationOfCk}
        C_{k_2}\lesssim C_{k_1}^\eta C_{k_3}^{1-\eta}, 
    \end{equation}
    where
    \begin{equation*}
        \frac{1}{k_2}=\frac{\eta}{k_2}+\frac{1-\eta}{k_3}.
    \end{equation*}

    \textbf{Case 2.2. $\lambda_1-\nu_1=0$.} Without loss of generosity, we assume $\nu_1=0$. According to the condition $\lambda_1\ge\mu_1\ge\nu_1$ we have
    \begin{equation*}
        \lambda_1=\mu_1=\nu_1=0.
    \end{equation*}
    A direct calculation shows us
    \begin{equation}\label{ParameterCondition1}
        \mu_2=\eta\lambda_2+(1-\eta)\nu_2.
    \end{equation}
    Then apply the H\"older conditions and (\ref{ParameterCondition1}),
    \begin{align*}
        |u(x)-u(y)|
        &\le|u(x)-u(y)|^\eta
        |u(x)-u(y)|^{1-\eta} \\
        &\le C_\lambda^\eta C_\nu^{1-\eta}|x-y|^{\eta\lambda_2+(1-\eta)\nu_2} \\
        &=C_\lambda^\eta C_\nu^{1-\eta}|x-y|^{\mu_2}.
    \end{align*}
    This tells us
    \begin{equation*}
        C_\mu\le C_\lambda^\eta\cdot C_\nu^{1-\eta}.
    \end{equation*}
    Applying the interpolation inequality (\ref{InterpolationConcerningSums}) we have
    \begin{align*}
        \|u\|_{X^\frac{1}{\mu}(\R^n)}&=C_0+C_\mu \\
        &=C_0^\eta C_0^{1-\eta}+C_\lambda^\eta\cdot C_\nu^{1-\eta} \\
        &\le(C_0+C_\lambda)^{\eta}(C_0+C_\nu)^{1-\eta} \\
        &=\|u\|_{X^\frac{1}{\lambda}(\R^n)}^{\eta}\|u\|_{X^\frac{1}{\nu}(\R^n)}^{1-\eta}.
    \end{align*}

    \textbf{Case 2.3. $\lambda_1-\nu_1=1$ and $\mu_2=1$.} Without loss of generosity, we assume $\nu_1=0$. Since $\lambda_1\ge\mu_1\ge\nu_1$, then
    \begin{equation*}
        \lambda_1=\mu_1=\nu_1=0.
    \end{equation*}
    By a calculation, we obtain
    \begin{align*}
        1=\mu_2&=-n\mu-\mu_1 \\
        &=-n(\eta\lambda+(1-\eta)\nu)-\mu_1 \\
        &=\eta(1+\lambda_2)+(1-\eta)\nu_2,
    \end{align*}
    which equivalent to
    \begin{equation}\label{ParameterCondition2}
        \eta\lambda_2+(1-\eta)(\nu_2-1)=0.
    \end{equation}
    Since $u\in C_0^\infty(\R^n)$, there must exist point $x\in\R$ and direction $e_i$ such that $|D_i u(x)|=C_\mu$. Without loss of generality, we assume $|D_1 u(0)|=C_\mu$, then
    \begin{equation*}
        |D_1 u(0)-D_1 u(he_1)|\le|Du(0)-Du(he_1)|\le C_\lambda h^{\lambda_2}.
    \end{equation*}
    Thus we have
    \begin{equation*}
        |D_1 u(he_1)|\ge |D_1 u(0)|-|D_1 u(0)-D_1 u(he_1)|\ge C_\mu-C_\lambda h^{\lambda_2},
    \end{equation*}
    where
    \begin{equation*}
        h\in\left[0,\l\frac{C_\mu}{C_\lambda}\r^\frac{1}{\lambda_2}\right]:=[0,R].
    \end{equation*}
    It is worth noting when $h\in[0,R]$, $D_1 u(he_1)$ maintains the same symbol. Therefore
    \begin{align*}
        |u(0)-u(Re_1)|&=\int_0^R |D_1 u(he_1)|\d h \\
        &\ge\int_0^R\l C_\mu-C_\lambda h^{\lambda_2}\r\d h \\
        &=C_\mu R-\frac{C_\lambda}{\lambda_2+1}R^{\lambda_2+1}.
    \end{align*}
    Recall $u\in C^{\nu_1,\nu_2}$, thus we have the estimate
    \begin{equation*}
        |u(0)-u(Re_1)|\le C_\nu R^{\nu_2}.
    \end{equation*}
    Combing them together we obtain
    \begin{equation*}
        C_\mu R-\frac{C_\lambda}{\lambda_2+1}R^{\lambda_2+1}\le C_\nu R^{\nu_2}.
    \end{equation*}
    Substitute $R=\l\frac{C_\mu}{C_\lambda}\r^\frac{1}{\lambda_2}$ into the inequality,
    \begin{equation*}
        \frac{\lambda_2}{\lambda_2+1} C_\mu\le
        C_\nu\l\frac{C_\mu}{C_\lambda}\r^\frac{\nu_2-1}{\lambda_2}.
    \end{equation*}
    Recall the condition of parameters (\ref{ParameterCondition2}) we have
    \begin{equation*}
        \frac{\nu_2-1}{\lambda_2}=-\frac{\eta}{1-\eta}.
    \end{equation*}
    Thus we simplify and obtain
    \begin{equation}
        C_\mu\le\l1+\frac{1}{\lambda_2}\r^{1-\eta} C_\lambda^\eta C_\nu^{1-\eta},
    \end{equation}
    where $\lambda_2$ is independent of $u$.
    
    By applying the interpolation inequality for sums (\ref{InterpolationConcerningSums}), we derive the key interpolation inequality (\ref{interpolationInequality}). Combining these results with Lemma \ref{LemmaTransitivityOfInterpolation}, we establish the proof for the case $\nu \le 0$.

    \textbf{Case 3. $\mu=0$.} Finally, let's consider the case that spans both Lebesgue and H\"older norms. In this case we have $\lambda<0$ and $\nu>0$. First, let us assume $-\frac{1}{n}\le\lambda<0$ and $0<\nu\le 1$. Followed the notation we have
    \begin{align*}
        \lambda_1&=-\left[n\lambda+1\right]=0, \\
        \lambda_2&=-n\lambda-\lambda_1=-n\lambda.
    \end{align*}
    Define
    \begin{equation*}
        a+b:=\|u\|_{L^\infty(\R^n)}+\sup_{x,y\in\R^n}\frac{|u(x)-u(y)|}{|x-y|^{\lambda_2}}
        =\|u\|_{X^\frac{1}{\lambda}(\R^n)}<\infty
    \end{equation*}
    for short. Easy to see $a<\infty$ and $u$ is continuous. When $b=0$ the function is trivial. Without loss of generality, assume $|u(0)|=a$, and we obtain
    \begin{equation*}
        |a-u(x)|\le b|x|^{\lambda_2}.
    \end{equation*}
    Therefore, we have
    \begin{equation*}
        |u(x)|\ge a-|a-u(x)|\ge a-b|x|^{\lambda_2}, \quad 
        |x|\in\left[0,\l\frac{a}{b}\r^\frac{1}{\lambda_2}\right]:=[0,c].
    \end{equation*}
    Set $p:=\frac{1}{\nu}\ge 1$ yields
    \begin{align*}
        \int_{\R^n}|u|^p\d x&\ge\int_{B(0,c)}|u|^p\d x \\
        &=\int_0^c\int_{\p B(0,r)}|u|^p\d S\d r \\
        &\gtrsim\int_0^c(a-br^{\lambda_2})^p r^{n-1}\d r \\
        &=\frac{a^p c^n}{\lambda_2}\int_0^1(1-s)^p s^{\frac{n}{\lambda_2}-1}\d s
        & &(r=cs^\frac{1}{\lambda_2}) \\
        &=\frac{a^p c^n}{\lambda_2}\B\l-\frac{1}{\lambda},p-1\r.
    \end{align*}
    Notice that
    \begin{equation*}
        (a^p c^n)^\frac{1}{p}=a^{1-\frac{\nu}{\lambda}} b^{\frac{\nu}{\lambda}}
        =a^\frac{1}{1-\eta}b^\frac{-\eta}{1-\eta}.
    \end{equation*}
    Finally we conclude the estimate
    \begin{align*}
        \|u\|_{X^\frac{1}{\mu}(\R^n)}&=\|u\|_{L^\infty(\R^n)} \\
        &=a \\
        &\le\l\frac{a}{b}+1\r^\eta a \\
        &=(a+b)^\eta(a^\frac{1}{1-\eta}b^\frac{-\eta}{1-\eta})^{1-\eta} \\
        &\lesssim\|u\|_{X^\frac{1}{\lambda}(\R^n)}^{\eta}
        \|u\|_{X^\frac{1}{\nu}(\R^n)}^{1-\eta}.
    \end{align*}
    By repeatedly applying the transitivity property of the interpolation inequality, as outlined in Lemma \ref{LemmaTransitivityOfInterpolation}, we systematically extend our results to cover all parameter ranges. This approach allows us to derive the remaining cases, thereby completing the proof of the interpolation theorem for the full spectrum of parameters.
\end{proof}

\section{Proof of Theorem \ref{TheoremGagliardoNirenbergInequalityGeneral}}

In this section, we will prove our conclusions using mathematical induction and the Interpolation Lemma \ref{LemmaInterpolation} we established in section 3. The basic approach is similar to the method used by L. Nirenberg in \cite{nirenberg2011elliptic}, with additional reference to the detailed proofs provided by A. Fiorenza, M.R. Formica, T. Roskovec, and F. Soudsk\'y in \cite{fiorenza2021detailed}.

\subsection{Initial Inequality}

To prove the Gagliardo-Nirenberg inequality by mathematical induction, we need a fundamental base inequality. Let’s examine the case when $l=1$ and $k=2$ in Theorem \ref{TheoremGagliardoNirenbergInequalityGeneral}.

\begin{lemma}\label{LemmaGagliardoNirenbergInequalityInitial}
    Let $-\infty<\frac{1}{p},\frac{1}{q},\frac{1}{r}\le 1$ and
    \begin{equation*}
        \frac{2}{q} = \frac{1}{p} + \frac{1}{r}.
    \end{equation*}
    Then for all $u\in C_0^\infty(\R^n)$ with $u\in X^{2,p}(\R^n)\cap X^r(\R^n)$, it follows that $u\in X^{1,q}(\R^n)$ and there exists a constant $C$ independent of $u$ such that
    \begin{equation}\label{GagliardoNirenbergInequalityInitial}
        \|Du\|_{X^q(\R^n)}^2 \le C \|D^2 u\|_{X^p(\R^n)} \|u\|_{X^r(\R^n)},
    \end{equation}
\end{lemma}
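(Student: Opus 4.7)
The plan is to establish the base case by reducing, according to the signs of $\frac{1}{p},\frac{1}{q},\frac{1}{r}$, to either the classical Lebesgue Gagliardo--Nirenberg inequality or to the interpolation lemma (Lemma \ref{LemmaInterpolation}), exploiting the H\"older equality (\ref{HolderEquality}) from Theorem \ref{TheoremSobolevGeneral} to trade derivatives for shifts of the index.

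In the pure Lebesgue regime $p,q,r\ge 1$ I would run the classical proof. After mollifying so every integral below is finite, integration by parts yields
\begin{equation*}
\int|Du|^q\,\d x = -\sum_i\int u\,\p_i\bigl(|Du|^{q-2}D_iu\bigr)\,\d x \le C\int|u|\,|D^2u|\,|Du|^{q-2}\,\d x,
\end{equation*}
and H\"older's inequality with exponents $\bigl(r,p,\tfrac{q}{q-2}\bigr)$, whose reciprocals sum to $1$ precisely because of the hypothesis $\frac{2}{q}=\frac{1}{p}+\frac{1}{r}$, gives (\ref{GagliardoNirenbergInequalityInitial}) after dividing by $|Du|_q^{q-2}$.

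In the pure H\"older regime $p,q,r<0$ I would apply (\ref{HolderEquality}) to both sides: once to obtain $|Du|_q=|u|_{q^\ast}$ with $\frac{1}{q^\ast}=\frac{1}{q}-\frac{1}{n}$, and twice to obtain $|D^2u|_p=|u|_{p^{\ast\ast}}$ with $\frac{1}{p^{\ast\ast}}=\frac{1}{p}-\frac{2}{n}$. The hypothesis then translates into $\frac{2}{q^\ast}=\frac{1}{p^{\ast\ast}}+\frac{1}{r}$, so the target $|u|_{q^\ast}^2\le C\,|u|_{p^{\ast\ast}}|u|_r$ is exactly Lemma \ref{LemmaInterpolation} with $\eta=\frac{1}{2}$.

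For the mixed sign patterns I would bridge the two regimes: convert $|D^2u|_p$ to $|Du|_{\tilde{p}}$ via Theorem \ref{TheoremSobolevGeneral} (so $\frac{1}{\tilde{p}}=\frac{1}{p}-\frac{1}{n}$), and, whenever $r<0$, convert $|u|_r$ similarly to $|Du|_{\tilde{r}}$ with $\frac{1}{\tilde{r}}=\frac{1}{r}+\frac{1}{n}$, then apply Lemma \ref{LemmaInterpolation} to $Du$ with $\eta=\frac{1}{2}$; the hypothesis rearranges to $\frac{2}{q}=\frac{1}{\tilde{p}}+\frac{1}{\tilde{r}}$, matching what the interpolation lemma requires. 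The main obstacle will be the crossover case where one of $p,r$ is a H\"older index while the other is a Lebesgue index (for instance $p<0$, $r\ge 1$): the H\"older equality cannot be inverted on $|u|_r$, so I expect to fall back on the integration-by-parts scheme of the first step, bounding the sup part of $|D^2u|_p$ pointwise and controlling the remaining H\"older seminorm through a further application of Lemma \ref{LemmaInterpolation}. A standard mollification argument is also needed throughout to justify integration by parts for functions only assumed to lie in $X^{2,p}\cap X^r$.
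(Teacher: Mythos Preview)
Your plan is essentially the paper's own. For the pure Lebesgue regime the paper simply cites the classical argument (your integration-by-parts computation is the standard one from \cite{nirenberg2011elliptic}, \cite{fiorenza2021detailed}), and for the H\"older and mixed regimes the paper does exactly what you propose: shift indices via the H\"older equality (\ref{HolderEquality}) and the general Sobolev inequality (\ref{SobolevInequalityGeneral}), then invoke Lemma \ref{LemmaInterpolation} with $\eta=\tfrac{1}{2}$. Concretely, the paper writes
\begin{equation*}
|Du|_q^2\lesssim|Du|_{p^\ast}|Du|_{r_\ast}\lesssim|D^2u|_p\,|u|_r \qquad\text{when } r<0,
\end{equation*}
and
\begin{equation*}
|Du|_q^2=|u|_{q^\ast}^2\lesssim|u|_{p^{\ast\ast}}|u|_r\lesssim|D^2u|_p\,|u|_r \qquad\text{when } q<0,
\end{equation*}
which are precisely your second and third bullets.

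The one place you and the paper diverge is the crossover $p<0$, $r\ge 1$, $q\ge 1$. You correctly flag it as an obstacle: the H\"older equality cannot be run backward on $|u|_r$, and after the available index shifts you are left comparing norms of $Du$ against norms of $u$, which Lemma \ref{LemmaInterpolation} alone cannot bridge. Your proposed fallback---running integration by parts and bounding $|D^2u|$ pointwise by the sup part of the H\"older norm---does not close: after $|D^2u|\le C_0$ you are left with $\int|u|\,|Du|^{q-2}$, and the H\"older exponents $(r,\tfrac{q}{q-2})$ fail, since $\tfrac{1}{r}+\tfrac{q-2}{q}=1-\tfrac{1}{p}>1$. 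The paper does not attempt this case from scratch either; it disposes of it by citing Kufner--Wannebo \cite{kufner1995interpolation}, whose interpolation inequality covers precisely the configuration where the highest-order term carries a H\"older norm while the other two are Lebesgue. So your outline is sound apart from this one case, but that case genuinely requires an ingredient beyond what you sketch.
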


\begin{remark}
    Let us first discuss the different parameter cases involved in this lemma, some of which have already been proven.
    \begin{enumerate}
        \item When $p,q,r\ge 1$, the inequality involves only Lebesgue norms. This case was left as an exercise in L. Nirenberg's paper \cite{nirenberg2011elliptic}. Recently, A. Fiorenza, M.R. Formica, T. Roskovec and F. Soudsk\'y provided a detailed proof of this lemma under these conditions in their paper \cite{fiorenza2021detailed}.
        \item For $r\in[-\infty,n)$ and $p,q\in[1,\infty)$, relevant results can be found in the paper by A. Kufner and A. Wannebo \cite{kufner1995interpolation}.
        \item For the remaining cases, which currently lack established proofs, we will categorize them into two types for detailed analysis: $r\in[-n,0)$ and $q\in[-\infty,0)$. Since the value of $p$ is constrained by the parameter condition $\frac{2}{q} = \frac{1}{p} + \frac{1}{r}$, it does not require separate consideration. This division allows us to focus on the critical ranges of $r$ and $q$ while ensuring a systematic and comprehensive treatment of all cases.
    \end{enumerate}
\end{remark}

\begin{proof}[Proof of Lemma \ref{LemmaGagliardoNirenbergInequalityInitial}]
    We denote the converse Sobolev conjugate index $p_\ast$ with
    \begin{equation*}
        \frac{1}{p_\ast}=\frac{1}{p}+\frac{1}{n}.
    \end{equation*}    
    
    Consider the first case when $r\in[-n,0)$. Easy to verify
    \begin{equation*}
        \frac{2}{q}=\frac{1}{p^\ast}+\frac{1}{r_\ast}.
    \end{equation*}
    Note that
    \begin{equation*}
        \frac{1}{r_\ast}=\frac{1}{r}+\frac{1}{n}<0,
    \end{equation*}
    therefore, we have $r_\ast\in[-\infty,0)$, then apply the general Sobolev inequality (\ref{SobolevInequalityGeneral}), H\"older equality (\ref{HolderEquality}) and the interpolation inequality (\ref{interpolationInequality}), we have
    \begin{align*}
        \|Du\|_{X^{p^\ast}(\R^n)}&\lesssim\|D^2 u\|_{X^p(\R^n)}, \\
        \|Du\|_{X^{r_\ast}(\R^n)}&=\|u\|_{X^r(\R^n)}, \\
        \|Du\|_{X^q(\R^n)}^2&\lesssim \|Du\|_{X^{p^\ast}(\R^n)}\|Du\|_{X^{r_\ast}(\R^n)}.
    \end{align*}
    Combining them, we obtain inequality (\ref{GagliardoNirenbergInequalityInitial})
    \begin{equation*}
        \|Du\|_{X^q(\R^n)}^2 \le C \|D^2 u\|_{X^p(\R^n)} \|u\|_{X^r(\R^n)}.
    \end{equation*}

    For the second case when $q\in[-\infty,0)$. The parameters satisfy the condition
    \begin{equation*}
        \frac{2}{q^\ast}=\frac{1}{p^{\ast\ast}}+\frac{1}{r}.
    \end{equation*}
    Use (\ref{SobolevInequalityGeneral}), (\ref{HolderEquality}) and (\ref{interpolationInequality}) again we have
    \begin{align*}
        \|u\|_{X^{p^{\ast\ast}}(\R^n)}&\lesssim\|D^2 u\|_{X^p(\R^n)}, \\
        \|Du\|_{X^q(\R^n)}&=\|u\|_{X^{q^{\ast}}(\R^n)}, \\
        \|u\|_{X^{q^\ast}(\R^n)}^2&\lesssim \|u\|_{X^{p^{\ast\ast}}(\R^n)}\|u\|_{X^r(\R^n)}.
    \end{align*}
    By combining these, we arrive once again at inequality (\ref{GagliardoNirenbergInequalityInitial})
    \begin{equation*}
        \|Du\|_{X^q(\R^n)}^2 \le C \|D^2 u\|_{X^p(\R^n)} \|u\|_{X^r(\R^n)}.
    \end{equation*}

    With the discussion of these two cases, we complete the proof of the lemma.
\end{proof}

\subsection{Induction}

When $\theta=1$, Sobolev inequality (\ref{SobolevInequalityGeneral}) arises as a special case of the Gagliardo-Nirenberg inequality (\ref{GagliardoNirenbergInequalityGeneral}). In this subsection, we focus on the case $\theta=\frac{l}{k}$, where the parameters hold the condition:
\begin{equation*}
    \frac{1}{q} = \frac{l}{k}\cdot\frac{1}{p}+\frac{k-l}{k}\cdot\frac{1}{r}.
\end{equation*}

We prove the inequality for a pair of integers $(l,k)$ by mathematical induction. For the base case, let's consider $l=1$ and $k=2$, which has been addressed in Lemma \ref{LemmaGagliardoNirenbergInequalityInitial} in the preceding subsection.

First, we use induction method on the parameter $k$, assume the Gagliardo-Nirenberg inequality (\ref{GagliardoNirenbergInequalityGeneral}) holds for $l=1$ and $k=\Tilde{k}\in\N$. We want to show it also holds for $l=1$ and $k=\Tilde{k}+1$. Notice the parameters satisfy
\begin{equation}\label{parameterRelation1}
    \frac{1}{q}=\frac{1}{\Tilde{k}+1}\cdot\frac{1}{p}+\frac{\Tilde{k}}{\Tilde{k}+1}\cdot\frac{1}{r}.
\end{equation}
Set $s$ corresponding to $q,r$ such that
\begin{equation}\label{parameterRelation2}
    \frac{2}{q}=\frac{1}{s}+\frac{1}{r}.
\end{equation}
Then comes from (\ref{GagliardoNirenbergInequalityInitial}) we obtain
\begin{equation}\label{inequalityQSR}
    \|D u\|_{X^q(\R^n)} \le C \|D^2 u\|_{X^s(\R^n)}^\frac{1}{2} \|u\|_{X^r(\R^n)}^\frac{1}{2}.
\end{equation}
From the relations (\ref{parameterRelation1}) and (\ref{parameterRelation2}) of the parameters we derive that
\begin{equation*}
    \frac{1}{s}=\frac{1}{\Tilde{k}}\cdot\frac{1}{p}+\frac{\Tilde{k}-1}{\Tilde{k}}\cdot\frac{1}{q},
\end{equation*}
which coincides with the case $l=1$ and $k=\Tilde{k}$ we assumed, therefore apply (\ref{GagliardoNirenbergInequalityGeneral}) on $Du$ we have
\begin{equation}\label{inequalitySPQ}
    \|D^2 u\|_{X^s(\R^n)} \le C \|D^{\Tilde{k}+1} u\|_{X^p(\R^n)}^\frac{1}{\Tilde{k}} \|D u\|_{X^q(\R^n)}^\frac{\Tilde{k}-1}{\Tilde{k}}.
\end{equation}
Then inequalities (\ref{inequalityQSR}) and (\ref{inequalitySPQ}) imply that
\begin{equation*}
    \|D u\|_{X^q(\R^n)} \le C \|D^{\Tilde{k}+1} u\|_{X^p(\R^n)}^\frac{1}{\Tilde{k}+1} \|u\|_{X^r(\R^n)}^\frac{\Tilde{k}}{\Tilde{k}+1}.
\end{equation*}
Which completes the first induction.

Secondly, we use induction concerning the parameter $j$, assuming the Gagliardo-Nirenberg inequality (\ref{GagliardoNirenbergInequalityGeneral}) holds for $(l,k)=(\Tilde{l},\Tilde{k})\in\mathbb{N}^2$ with $\Tilde{l}<\Tilde{k}$. It is enough to prove it also holds for $(l,k)=(\Tilde{l}+1,\Tilde{k}+1)$. Similarly, we have the parameters satisfy
\begin{equation}\label{parameterRelation3}
    \frac{1}{q}=\frac{\Tilde{l}+1}{\Tilde{k}+1}\cdot\frac{1}{p}
    +\frac{\Tilde{k}-\Tilde{l}}{\Tilde{k}+1}\cdot\frac{1}{r}.
\end{equation}
Set $t$ to be such that
\begin{equation}\label{parameterRelation4}
    \frac{1}{q}=\frac{\Tilde{l}}{\Tilde{k}}\cdot\frac{1}{p}
    +\frac{\Tilde{k}-\Tilde{l}}{\Tilde{k}}\cdot\frac{1}{t}.
\end{equation}
Thus the parameters meet the assumption, apply (\ref{GagliardoNirenbergInequalityGeneral}) on $Du$ we get
\begin{equation}\label{inequalityQPT}
    \|D^{\Tilde{l}+1} u\|_{X^q(\R^n)} \le C \|D^{\Tilde{k}+1} u\|_{X^p(\R^n)}^\frac{\Tilde{l}}{\Tilde{k}}
    \|D u\|_{X^t(\R^n)}^\frac{\Tilde{k}-\Tilde{l}}{\Tilde{k}}.
\end{equation}
Then (\ref{parameterRelation3}) and (\ref{parameterRelation4}) imply that
\begin{equation*}
    \frac{1}{t}=\frac{1}{\Tilde{l}+1}\cdot\frac{1}{q}
    +\frac{\Tilde{l}}{\Tilde{l}+1}\cdot\frac{1}{r}.
\end{equation*}
Let $l=1$ and $k=\Tilde{l}+1$, it matched the result we have for the first induction
\begin{equation}\label{inequalityTQR}
    \|D u\|_{X^t(\R^n)} \le C \|D^{\Tilde{l}+1} u\|_{X^q(\R^n)}^\frac{1}{\Tilde{l}+1} \|u\|_{X^r(\R^n)}^\frac{\Tilde{l}}{\Tilde{l}+1}.
\end{equation}
Combining (\ref{inequalityQPT}) and (\ref{inequalityTQR}), we complete the second induction,
\begin{equation*}
    \|D^{\Tilde{l}+1} u\|_{X^q(\R^n)} \le C \|D^{\Tilde{k}+1} u\|_{X^p(\R^n)}^\frac{\Tilde{l}+1}{\Tilde{k}+1} \|u\|_{X^r(\R^n)}^\frac{\Tilde{k}-\Tilde{l}}{\Tilde{k}+1}.
\end{equation*}

\subsection{Interpolation}

Once we reach the inequalities for endpoints of $\theta=1$ and $\theta=\frac{l}{k}$, the general case is followed by the interpolation Lemma \ref{LemmaInterpolation}. Set
\begin{gather*}
    \frac{1}{q_1}-\frac{l}{n}=\frac{1}{p}-\frac{k}{n}, \\
    \frac{1}{q_2} = \frac{l}{k}\cdot\frac{1}{p}+\frac{k-l}{k}\cdot\frac{1}{r}, \\
    \eta+\frac{l}{k}\cdot(1-\eta)=\theta.
\end{gather*}
It is easy to verify that
\begin{equation*}
    \frac{1}{q}=\frac{\eta}{q_1}+\frac{1-\eta}{q_2}.
\end{equation*}
Then
\begin{align*}
    \|D^l u\|_{X^q(\R^n)}&\lesssim \|D^l u\|_{X^{q_1}(\R^n)}^\eta \|D^l u\|_{X^{q_2}(\R^n)}^{1-\eta} \\
    &\lesssim \|D^k u\|_{X^p(\R^n)}^\eta \|D^k u\|_{X^p(\R^n)}^{\frac{l}{k}(1-\eta)} \|u\|_{X^r(\R^n)}^{\frac{k-l}{k}(1-\eta)} \\
    &=\|D^k u\|_{X^p(\R^n)}^\theta \|u\|_{X^r(\R^n)}^{1-\theta}.
\end{align*}
By synthesizing the results derived from the initial inequality, the inductive argument, and the interpolation techniques, we successfully complete the proof of Theorem \ref{TheoremGagliardoNirenbergInequalityGeneral}.









\end{document}